\numberwithin{equation}{section}
\newtheorem{theorem}{Theorem}[section]
\newtheorem{lemma}{Lemma}[section]
\newtheorem{remark}{Remark}[section]
\date{}
\title{\bf The Linear  Stability and Basic Reproduction Numbers for Autonomous FDEs}
\author{Xiao-Qiang Zhao\thanks{Research supported in part by the NSERC of Canada (RGPIN-2019-05648).}\\
	Department of Mathematics and Statistics\\
	Memorial University of Newfoundland\\
	St. John's, NL A1C  5S7, Canada\\
	E-mail:\,  zhao@mun.ca}
\begin{document}
\maketitle

\centerline{Dedicated to Professor Yihong Du on the occasion of his 60th birthday}

\begin{abstract}
In this paper, we first prove  the stability equivalence between a linear
autonomous and cooperative functional differential equation (FDE)
and its associated autonomous and cooperative system without time delay. Then we present the theory of 
basic reproduction number $\mathcal{R}_0$ for general autonomous 
FDEs. As an illustrative example, we
also establish the threshold dynamics for a time-delayed  population model of black-legged ticks in terms of $\mathcal{R}_0$.
\end{abstract}

\smallskip

\noindent {\bf Key words and phrases:}  Autonomous FDEs, linear stability, exponential growth bound, 
basic reproduction number, threshold dynamics.

\smallskip

\noindent {\bf MSC:} 
34K06, 34K30, 37C65 , 37L15, 92D25.

\smallskip

\noindent {\bf Short title: }  Stability and $\mathcal{R}_0$   for 
autonomous FDEs

\def\rbb{\mathbb{R}}
\def\ml{\mathcal{L}}
 \def\th{\theta}
 \def\si{\sigma}  
 \def\mr{\mathcal{R}}

\section{Introduction}

The basic reproduction number (ratio) $\mathcal{R}_0$ is one of the most important concepts in population
biology. In epidemiology, $\mathcal{R}_0$ is the expected  number of secondary cases produced, in
a completely susceptible population, by a typical infective individual, and $\mathcal{R}_0$ is also
a commonly used measure of the effort needed to control an infectious disease.
The theory of $\mathcal{R}_0$ has been greatly developed for various evolution systems, see survey and review papers \cite{Hees, HSW,CD,van,Zhao} and references 
therein.  Among these research works,
Diekmann, Heesterbeek and Metz \cite{DHM} introduced  the next generation operators approach;
van den Driessche and Watmough \cite{DriesscheWatmough2002} gave a formula of $\mathcal{R}_0$ and proved the local 
stability in terms of $\mathcal{R}_0$ for compartmental 
ordinary differential equation (ODE) models; 
 Baca\"{e}r and and Guernaoui \cite{Bac1} proposed a general definition of $\mathcal{R}_0$ for population models in a periodic environment; Wang and Zhao \cite{WangZhao1} characterized $\mathcal{R}_0$ and 
 proved the local stability in terms of $\mathcal{R}_0$ for periodic compartmental ODE models;
Thieme \cite{ThiemeSIAP} presented the theory of spectral bounds and reproduction numbers for infinite-dimensional population structure; Wang and Zhao \cite{WangZhao2}  gave a biologically meaningful definition of $\mathcal{R}_0$ for reaction-diffusion models and found its relation to the principal eigenvalue of an associated elliptic eigenvalue problem; and
 Inaba \cite{Inaba} employed a generation evolution operator to give  a new definition of $\mathcal{R}_0$  for structured populations in heterogeneous environments.

Regarding  population models with time delay, 
Zhao \cite{ZhaoJDDE}  established the theory of $\mathcal{R}_0$ 
for periodic time-delayed compartmental systems 
where the internal transition  does not involve time
delay, and give a formula of $\mathcal{R}_0$ in the autonomous case
of  such systems (see \cite[Corollary 2.1]{ZhaoJDDE}). Liang, Zhang and Zhao \cite{LZZJDDE19}  further
generalized this theory to periodic abstract  FDEs 
so that it can be applied to spatial population models whose solution
maps are not compact. More recently,  Huang, Wu and Zhao \cite{HWZJDE}
extended the theory developed in \cite{LZZJDDE19} to more general periodic abstract FDEs where the internal transition term also has time delay.  Note that any autonomous evolution system can be regarded as
an $t_0$-periodic one for any given positive number $t_0$.
As a straightforward consequence of this theory, a general formula of $\mathcal{R}_0$
was provided for autonomous FDEs (see \cite[Corollary A.8]{HWZJDE}).
  
In this paper, we will first show that the stability of a linear
autonomous and cooperative FDE can be determined by that of an associated autonomous and cooperative system without time delay
from a perspective of the stability equivalence related to $\mathcal{R}_0$. This result further develops the earlier works in \cite{MS1991, Smith95,ThZh} and also of its own interest.
Then we will introduce next generation operators directly for general autonomous FDEs to define $\mathcal{R}_0$ rather than regarding them as time-periodic systems, and prove the stability  equivalence in terms of $\mathcal{R}_0$, which makes the formulas given in \cite[Corollary 2.1]{ZhaoJDDE} and 
\cite[Corollary A.8]{HWZJDE}  have more intuitive biological meaning.

The rest of this paper is organized as follows. In the next two sections, 
we consider autonomous FDEs on $\mathbb{R}^m$ and 
abstract autonomous FDEs on an ordered Banach space $X$, respectively.
In section 4, as an illustrative example, we briefly study a time-delayed  population model of black-legged ticks to obtain a threshold type result on its global dynamics in terms of  $\mathcal{R}_0$.

\section{Autonomous FDEs on $\mathbb{R}^m$}
Let $\tau\in\rbb_{+}$ be  given, and $C=C([-\tau, 0], \rbb^m)$ equipped with the maximum norm $\|\cdot\|_{C}$ and the positive cone $C_{+}=C([-\tau, 0], \rbb^m_{+})$. Then $(C, C_+)$ is an ordered Banach space. Let  $\ml(C, \rbb^m)$ be the space of all bounded and linear operators from $C$ to $\rbb^m$.
For any  $L\in  \ml(C, \rbb^m)$, we define $\hat{L}\in  \ml(\rbb^m, \rbb^m)$ by
$$
\hat{L}x=L(\hat{x}), \quad  \forall x\in \rbb^m,
$$
where $\hat{x}(\theta)=x, \forall \theta\in [-\tau,0]$.  Clearly, $\hat{L}$
can be regarded as an $m\times m$ matrix.  Recall that the stability modulus of an $m\times m$ matrix $A$ is defined as 
$$
s(A)=\max\big\{\Re(\lambda): \lambda \ \text{is an eigenvalue of  } A \big\}. 
$$

For a continuous function $u: [-\tau, \si)\to\rbb^m$ with $\si>0$, we define $u_t\in C$ by 
\[
u_t(\th)=u(t+\th), \quad \forall \, \th\in [-\tau, 0],
\]
for any $t\in [0, \si)$. 

Let $L\in  \ml(C, \rbb^m)$ be given. By the general theory of linear FDEs in \cite[Section 8.1]{Hale1993}, it follows that for any $\phi\in C$, the linear system 
\begin{equation}
\dfrac{du(t)}{dt}=L(u_t)
\label{autoE}
\end{equation}
admits a unique solution $u(t,\phi)$ on $[0, \infty)$ with $u_0=\phi$. 
Let $T(t)$ be the solution semigroup of linear system \eqref{autoE}, that is,
\begin{equation}
T(t)\phi=u_t(\phi), \quad \forall\phi\in C, \, t\ge 0.
\end{equation}
We define  the stability modulus of linear system \eqref{autoE} as
$$
s(L)=\max\big\{\Re(\lambda): \lambda \ \text{is an eigenvalue of system \eqref{autoE}}\big\}. 
$$

Assume that $L=(L_1,\cdots, L_m)$ satisfies the following quasimonotone  condition:
\begin{enumerate}
	\item[(K)]  $L_i(\phi)\geq 0$ whenever $\phi\geq 0$ and $\phi_i(0)=0$.
	\end{enumerate}
 Then \cite[Theorem 5.1.1]{Smith95} implies that $T(t)$ is monotone semiflow on 
$C$. Further, if $L$ satisfies (K) and the irreducibility conditions (R) and (I), then $s(L)$ is the principal eigenvalue of system \eqref{autoE} (see \cite[Theorem 5.5.1]{Smith95}).

The following result improves \cite[Corollary 5.5.2]{Smith95}, where
the irreducibility conditions (R) and (I) were assumed.
\begin{theorem}\label{stability}
Assume that $L=(L_1,\cdots, L_m)$ satisfies 
condition (K). Then s(L) and $s(\hat{L})$ have the same sign.
\end{theorem}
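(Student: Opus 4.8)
The plan is to reduce both $s(L)$ and $s(\hat L)$ to a single real‑valued, monotone function of one real variable. For $\lambda\in\rbb$ let $A(\lambda)$ be the $m\times m$ matrix determined by $A(\lambda)x=L(e^{\lambda\cdot}x)$ for $x\in\rbb^m$, where $e^{\lambda\cdot}x\in C$ is the function $\th\mapsto e^{\lambda\th}x$; thus $A(0)=\hat L$, and by the standard theory of linear FDEs a number $\lambda\in\mathbb{C}$ is an eigenvalue of \eqref{autoE} precisely when $\det(\lambda I-A(\lambda))=0$, i.e. $\lambda\in\sigma(A(\lambda))$, these eigenvalues having real parts bounded above (since $\|A(\lambda)\|\le\|L\|$ whenever $\Re\lambda\ge 0$). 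The first observation is that condition (K) forces $A(\lambda)$ to be a Metzler (quasipositive) matrix for every real $\lambda$: if $x\ge 0$ and $x_i=0$, then $e^{\lambda\cdot}x\ge 0$ in $C_+$ with its $i$-th component vanishing at $\th=0$, so $(A(\lambda)x)_i=L_i(e^{\lambda\cdot}x)\ge 0$. Hence, by Perron--Frobenius for Metzler matrices, $\mu(\lambda):=s(A(\lambda))$ is itself an eigenvalue of $A(\lambda)$ with a nonnegative eigenvector.

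The first main step is to show that $f(\lambda):=\lambda-\mu(\lambda)$ is continuous, strictly increasing, and satisfies $f(\lambda)\to\pm\infty$ as $\lambda\to\pm\infty$. Continuity of $\mu$ is immediate from continuity of $\lambda\mapsto A(\lambda)$ and of the spectral bound. For monotonicity, fix $\lambda_1<\lambda_2$ and $x\ge 0$; then $w:=e^{\lambda_1\cdot}x-e^{\lambda_2\cdot}x\in C_+$ with $w_i(0)=x_i-x_i=0$ for every $i$, so (K) gives $L_i(w)\ge 0$ for each $i$, i.e. $A(\lambda_1)x\ge A(\lambda_2)x$. Thus $A(\lambda_1)-A(\lambda_2)$ is a nonnegative matrix, and monotonicity of the spectral bound of Metzler matrices yields $\mu(\lambda_1)\ge\mu(\lambda_2)$; in particular $f$ is strictly increasing. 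Taking $\lambda_1=0$ (resp. $\lambda_2=0$) shows $\mu(\lambda)\le s(\hat L)$ for $\lambda\ge 0$ (resp. $\mu(\lambda)\ge s(\hat L)$ for $\lambda\le 0$), so $f(\lambda)\ge\lambda-s(\hat L)\to+\infty$ and $f(\lambda)\le\lambda-s(\hat L)\to-\infty$ at the two ends. Therefore $f$ has a unique real zero $\lambda^\ast$, and since $f(0)=-s(\hat L)$, the sign of $\lambda^\ast$ equals the sign of $s(\hat L)$ (e.g. $\lambda^\ast>0\iff f(0)<0\iff s(\hat L)>0$, and likewise for the other two cases).

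The second main step is to prove $s(L)=\lambda^\ast$. Since $\mu(\lambda^\ast)=\lambda^\ast\in\sigma(A(\lambda^\ast))$, the number $\lambda^\ast$ is an eigenvalue of \eqref{autoE}, so $s(L)\ge\lambda^\ast$. For the reverse inequality it suffices to show that every eigenvalue $\lambda_0=a+ib$ of \eqref{autoE} satisfies $a\le\lambda^\ast$. Let $v\in\mathbb{C}^m\setminus\{0\}$ with $\lambda_0 v=A(\lambda_0)v$, i.e. $\lambda_0 v_j=L_j(e^{\lambda_0\cdot}v)$ for all $j$. Using the Riesz representation of $L_j$ one can write $L_j(\phi)=c_j\phi_j(0)+P_j(\phi)$ with $c_j\in\rbb$ and $P_j$ a positive linear functional on $C$ (here condition (K) is used to guarantee $P_j\ge 0$ after the atom at $\th=0$ is split off). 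Then $|\lambda_0-c_j|\,|v_j|=|P_j(e^{\lambda_0\cdot}v)|\le P_j(e^{a\cdot}|v|)$ (componentwise moduli), and since $a-c_j\le|\lambda_0-c_j|$ we obtain $a|v_j|\le c_j|v_j|+P_j(e^{a\cdot}|v|)=L_j(e^{a\cdot}|v|)=(A(a)|v|)_j$ for every $j$; that is, $A(a)|v|\ge a|v|$ with $|v|\ge 0$, $|v|\ne 0$. By a Collatz--Wielandt (Perron--Frobenius) argument for the Metzler matrix $A(a)$ this forces $\mu(a)=s(A(a))\ge a$, i.e. $f(a)\le 0$, hence $a\le\lambda^\ast$. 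Thus $s(L)=\lambda^\ast$, which has the same sign as $s(\hat L)$, completing the proof.

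I expect the main obstacle to lie precisely in the two places where quasimonotonicity of $L$ must be upgraded: the monotonicity of $\lambda\mapsto A(\lambda)$ and the modulus estimate $A(a)|v|\ge a|v|$. In neither place can one compare $L(\phi)$ and $L(\psi)$ directly for $\phi\le\psi$; instead one must arrange that the relevant test function either vanishes at $\th=0$ (first step) or has its atom at $\th=0$ separated out (second step), so that (K) can be invoked component by component. The remaining ingredients — continuity and monotonicity of the Metzler spectral bound, Perron--Frobenius, and the characterization of the eigenvalues of \eqref{autoE} as the zeros of $\det(\lambda I-A(\lambda))$ with real parts bounded above — are all standard.
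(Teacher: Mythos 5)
Your proof is correct, but it takes a genuinely different route from the paper's. The paper converts \eqref{autoE} into a $t_0$-periodic problem and chains together three known sign equivalences: the spectral mapping theorem ($sign(s(L))=sign(r(T(t_0))-1)$), the periodic-FDE $\mathcal{R}_0$ theory of Zhao's JDDE paper applied to the splitting $L(\phi)=(D-aI)\phi(0)+\big(a\phi(0)+\bar L(\phi)\big)$ furnished by Smith's Lemma 5.1.2, and the van den Driessche--Watmough sign relation for the resulting ODE pair $(\hat{\mathcal{F}},\mathcal{V})$. You instead work directly with the characteristic matrix $A(\lambda)x=L(e^{\lambda\cdot}x)$: condition (K) makes each $A(\lambda)$ Metzler and makes $\lambda\mapsto A(\lambda)$ nonincreasing (via test functions vanishing at $\th=0$), so $f(\lambda)=\lambda-s(A(\lambda))$ is strictly increasing with a unique real zero $\lambda^*$ whose sign is that of $f(0)=-s(\hat L)$; the modulus estimate $A(a)|v|\ge a|v|$, which uses the same Smith-type decomposition $L_j(\phi)=c_j\phi_j(0)+P_j(\phi)$ to isolate the atom at $\th=0$, then pins every eigenvalue's real part below $\lambda^*$, giving $s(L)=\lambda^*$. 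This is essentially an irreducibility-free extension of Smith's Theorem 5.5.1/Corollary 5.5.2 argument (the reducible Perron--Frobenius and Collatz--Wielandt facts you invoke do hold, e.g.\ by power iteration on $A(a)+cI\ge 0$). The trade-off: the paper's proof is short given the $\mathcal{R}_0$ machinery it already cites and fits its narrative of stability equivalences, while yours is elementary and self-contained modulo finite-dimensional Perron--Frobenius, and it proves strictly more than the statement asks --- namely that $s(L)$ is attained at a real eigenvalue $\lambda^*$ with a nonnegative eigenvector even without the irreducibility hypotheses (R) and (I).
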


\begin{proof}
Given $t_0>0$, we may regard system \eqref{autoE} as a linear $t_0$-periodic system.  By the spectral mapping theorem for linear autonomous FDEs (see, e.g.,
	\cite[Lemma 7.6.1]{Hale1993}),  it follows that 
	\begin{equation*}
	r(T(t))=e^{s(L)t},\quad \forall t>0,
	\end{equation*}
	and hence,  $sign(s(L))=sign(r(T(t_0))-1)$.

Since 	$L$ satisfies (K), \cite[Lemma 5.1.2]{Smith95} implies that 
there exist a diagonal matrix $D=diag(a_1,\cdots,a_m)$ and $\bar{L}
\in \ml(C, \rbb^m)$ with $\bar{L}(C_+)\subset \mathbb{R}^m_+$ such that
$$
L(\phi)=D\phi(0)+\bar{L}(\phi), \quad \forall \phi\in C.
$$
Fix a real number  $a>\max\{0, a_1, \cdots,a_m\}$. Thus, we can write 
$L$ as 
$$
L=(D-aI)\phi(0)+(a\phi(0)+\bar{L}(\phi)), \quad \forall \phi\in C.
$$
By  \cite[Theorem 2.1 and Corollary 2.1]{ZhaoJDDE} with $\mathcal{F}(\phi)=a\phi(0)+\bar{L}(\phi)$
and $\mathcal{V}=aI-D$, it follows that 
\begin{equation}\label{PerFDER0}
sign(s(L))=sign(r(T(t_0))-1)=sign(r(\hat{\mathcal{F}}\mathcal{V}^{-1})-1).
\end{equation}
On the other hand, the well-known result on basic reproduction numbers
for ODE systems \cite[Theorem 2]{DriesscheWatmough2002} (see also \cite[Theorem 2.3]{ThiemeSIAP}) gives rise to 
\begin{equation}\label{ODER0}
sign(r(\hat{\mathcal{F}}\mathcal{V}^{-1})-1)
=sign (s(\hat{\mathcal{F}}-\mathcal{V}))=sign (s(\hat{L})).
\end{equation}
Now \eqref{PerFDER0} and \eqref{ODER0} show that
$sign(s(L))=sign (s(\hat{L}))$.
\end{proof}

Next, we consider the following autonomous linear
 FDE:
\begin{equation}
\dfrac{du(t)}{dt}=F(u_t)-V(u_t),
\label{eqn1.1}
\end{equation}
where $F\in \ml(C, \rbb^m)$ and 
$ V\in \ml(C, \rbb^m)$.
System \eqref{eqn1.1} may come from the equations of infectious variables in  the
linearization of a given time-delayed compartmental epidemic model at a
disease-free equilibrium.
As such, $m$ is the total number of the infectious compartments, and the newly infected individuals at
time $t$ depend linearly on the infectious individuals over the time interval
$[t-\tau, t]$, which is described by $F(u_t)$. Further, the internal transition of individuals in the infectious compartments (e.g., natural and disease-induced deaths, and movements among compartments) is governed by the linear time-delayed  system:
\begin{equation}
\label{internal_sys}
\dfrac{du(t)}{dt}=-V(u_t).
\end{equation}

Without loss of generality, we assume that $F: C\to \mathbb{R}^m$ is given by
$$
F(\phi)=\int_{-\tau}^0d[\eta(\theta)]\phi(\theta),\quad  \forall \phi\in C,
$$
where $\eta (\theta)$ is an $m\times m$ matrix function which is measurable in $\theta\in \mathbb{R}$ and normalized so that $\eta(\theta)=0$ for all $\theta\geq 0$
and $\eta(\theta)=\eta(-\tau)$ for all $\theta\leq  -\tau$. Moreover, $\eta(\theta)$  is
continuous from the left in $\theta$ on $(-\tau,0)$, and the variation of
$\eta(\cdot)$ on $[-\tau,0]$ is bounded.
Clearly, $F$ is also well defined on $L^1([-\tau, 0],\rbb^m)$.

Let $Q(t)$ and $\Phi(t)$ be the solution semigroups of linear systems \eqref{eqn1.1} and \eqref{internal_sys}, respectively.
 According to \cite[Theorem 1]{SchJDE}, there is a family of continuous matrix-valued functions $X(t)$ on $[0,\infty)$
such that for any $\phi\in C$ and $g\in C([0,\infty), \mathbb{R}^m)$, the unique solution $y(t)$ with initial data $y_0=\phi$ of the following inhomogeneous system 
\begin{equation}
\dfrac{du(t)}{dt}=-V(u_t) +g(t), \quad t\geq 0
\end{equation}
satisfies 
$$
y(t)=[\Phi(t)\phi](0)+\int_0^tX(t-s)g(s)ds,  \quad \forall t\geq 0.
$$
Choose a family of functions  $h^\epsilon\in C^\infty([-\tau,0], [0,1])$
with $\epsilon\in(0,\tau]$ such that
$h^\epsilon$ is nondecreasing on $[-\tau,0]$;
$h^\epsilon(\theta)=0,\,  \forall \theta\in [-\tau, -\epsilon]$ and $h^\epsilon(0)=1$; and 
$h^{\tilde{\epsilon}}(\theta)\leq h^{\epsilon}(\theta),\ \forall 0<\tilde{\epsilon}\leq\epsilon\leq \tau$ and $\theta\in[-\tau,0]$.  
Define a family of linear operators on $\mathbb{R}^m$
 by 
$$
X^\epsilon(t)x=[\Phi(t)h^\epsilon(\cdot)x](0), \, \forall t\geq 0,
x\in \mathbb{R}^m.
$$
It then follows from \cite[Theorem 1]{SchJDE} that 
\begin{equation}
X(t)x=\lim_{\epsilon\rightarrow 0^+}X^\epsilon(t)x,\,  \, \forall  t\geq 0, x\in \mathbb{R}^m.
\end{equation}
Thus, $z(t):=X(t)x$ may be regarded as a solution of system \eqref{internal_sys} on $[0, \infty)$ with the initial data satisfying $z_0(\theta)=0$ for all $\theta\in [-\tau, 0)$ and $z_0(0)=x$.

Let $s(-V)$ be the stability modulus of linear system 
\eqref{internal_sys}. Then we have the following observation.

\begin{lemma} 
	 If $s(-V)<0$, then $\int_0^{\infty}X(s)ds=\hat{V}^{-1}$.
\end{lemma}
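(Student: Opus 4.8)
The plan is to read off $\int_0^\infty X(s)\,ds$ from the variation-of-constants formula by driving system \eqref{internal_sys} with a constant forcing term and using that $s(-V)<0$ forces every solution to relax to the corresponding equilibrium. Before that I would record two preliminary facts. \textbf{(i)} Since the solution semigroup $\Phi(t)$ of \eqref{internal_sys} is compact for $t\ge\tau$, the spectral mapping theorem for autonomous linear FDEs (see, e.g., \cite[Lemma 7.6.1]{Hale1993}) gives $r(\Phi(t))=e^{s(-V)t}$ for $t>0$; as $s(-V)<0$, there exist $M\ge 1$ and $\omega\in(s(-V),0)$ with $\|\Phi(t)\|_{\ml(C,C)}\le Me^{\omega t}$ for all $t\ge 0$. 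Because $h^\epsilon$ is nonnegative, nondecreasing with $h^\epsilon(0)=1$ and values in $[0,1]$, we have $\|h^\epsilon(\cdot)x\|_C=|x|$, so $\|X^\epsilon(t)x\|\le\|\Phi(t)h^\epsilon(\cdot)x\|_C\le Me^{\omega t}|x|$; letting $\epsilon\to 0^+$ yields $\|X(t)\|\le Me^{\omega t}$ for all $t\ge 0$, and in particular $\int_0^\infty X(s)\,ds$ converges absolutely. \textbf{(ii)} If $\hat Vc=0$ for some $c\ne 0$, then the constant function $u(t)\equiv c$ solves \eqref{internal_sys}, so $0$ is an eigenvalue of \eqref{internal_sys}, contradicting $s(-V)<0$; hence $\hat V$ is invertible.

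Next I would fix $c\in\rbb^m$ and let $y(t)$ be the solution of $\frac{du}{dt}=-V(u_t)+c$ with $y_0=0$. By the variation-of-constants formula quoted from \cite[Theorem 1]{SchJDE} together with (i),
\[
y(t)=\int_0^t X(t-s)c\,ds=\int_0^t X(r)c\,dr\ \longrightarrow\ \Big(\int_0^\infty X(r)\,dr\Big)c\qquad\text{as }t\to\infty .
\]
On the other hand, by (ii) the constant function $\widehat p$ with $p:=\hat V^{-1}c$ is an equilibrium of this inhomogeneous system, and $v(t):=y(t)-p$ satisfies $\frac{dv}{dt}=-V(v_t)$ with $v_0=-\widehat p$; hence $v(t)=[\Phi(t)(-\widehat p)](0)\to 0$ by (i), i.e.\ $y(t)\to p=\hat V^{-1}c$. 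Comparing the two limits gives $\big(\int_0^\infty X(r)\,dr\big)c=\hat V^{-1}c$ for every $c\in\rbb^m$, which is the assertion.

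The step I expect to be the main obstacle is the soft part (i): one must be certain that $s(-V)<0$ really delivers a uniform exponential bound $\|\Phi(t)\|\le Me^{\omega t}$ with $\omega<0$ — so that the improper integral converges and both limits above are legitimate — and this rests on the eventual compactness of $\Phi(t)$ together with the spectral mapping theorem; the remaining manipulations are routine. A somewhat quicker alternative avoids the constant-forcing device via Laplace transforms: writing $V(\phi)=\int_{-\tau}^0 d[\mu(\theta)]\phi(\theta)$, one has $\int_0^\infty e^{-\lambda t}X(t)\,dt=\Delta(\lambda)^{-1}$ with $\Delta(\lambda)=\lambda I+\int_{-\tau}^0 e^{\lambda\theta}\,d[\mu(\theta)]$ whenever $\Re\lambda>s(-V)$, and evaluating at $\lambda=0$ (permissible since $s(-V)<0$) gives $\int_0^\infty X(t)\,dt=\Delta(0)^{-1}=\hat V^{-1}$.
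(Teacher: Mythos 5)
Your proof is correct and follows essentially the same route as the paper: drive \eqref{internal_sys} with a constant forcing term, identify the zero-initial-data solution as $\bigl(\int_0^t X(s)\,ds\bigr)x$ via the variation-of-constants formula, and let $t\to\infty$ using that $s(-V)<0$ makes the equilibrium $\hat V^{-1}x$ globally attractive. Your preliminary steps (the exponential bound on $\Phi(t)$ and $X(t)$, and the invertibility of $\hat V$) simply make explicit what the paper asserts without elaboration, so no further comparison is needed.
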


\begin{proof}
	For any given $x\in \rbb^m$, it is easy to see that the autonomous system
	\begin{equation}\label{inverse}
	\dfrac{du(t)}{dt}=-V(u_t) +x
	\end{equation}
	has a unique equilibrium $u^*=\hat{V}^{-1}x$.  
	Since $s(-V)<0$,  we conclude that $u^*$ is globally attractive
	for system \eqref{inverse} in $C$. Note that
	$z(t):=\int_0^tX(t-s)xds=\left(\int_0^tX(s)ds\right)x$ is the unique solution on $[0,\infty)$ of \eqref{inverse} with $z_0=0$.  It then follows that 
	$$
	\lim_{t\to \infty}z(t)=\left(\int_0^{\infty}X(s)ds\right) x=u^*=\hat{V}^{-1}x.
	$$
	Since $x\in \rbb^m$ is arbitrary, we obtain $\int_0^{\infty}X(s)ds=\hat{V}^{-1}$.
\end{proof}

To introduce the basic reproduction number for system \eqref{eqn1.1}, throughout this section we assume that 
\begin{enumerate}
	\item[(A1)]  $F$ is positive in the sense that 
	$F(C_+)\subset \mathbb{R}^m_+$.
	\item[(A2)] $-V$ satisfies  condition (K) and $s(-V)<0$.
\end{enumerate}

Let $x\in\rbb^m_{+}$ be the initial infected individuals distributed among  $m$ compartments. The distribution of these individuals under the internal evolution at time $t\ge 0$ is $X(t)x$. Define a function 
\begin{equation*}
\tilde{u}(t)=
\begin{cases}
X(t)x, & \forall \, t\ge 0,\\
0, &  \forall \, -\tau\le t< 0.
\end{cases}
\end{equation*}
Then the new infection of the infected individuals at time $t$ is $F(\tilde{u}_t)$.  Thus, the distribution of the total new infection is 
\begin{equation*}
\begin{split}
\int_{0}^{\infty}F(\tilde{u}_t)dt &= \int_0^{\infty}\left(\int_{-\tau}^0d\eta(\th)\tilde{u}(t+\th)\right)dt\\
&=\int_{-\tau}^{0}d\eta(\theta)\int_0^{\infty}\tilde{u}(t+\theta)dt\\
&=\int_{-\tau}^{0}d\eta(\theta)\int_{\theta}^{\infty}\tilde{u}(s)ds\\
&=\int_{-\tau}^{0}d\eta(\theta)\left(\int_{\theta}^{0}\tilde{u}(s)ds+\int_{0}^{\infty}\tilde{u}(s)ds\right)\\
&=\int_{-\tau}^{0}d\eta(\theta)\int_{0}^{\infty}X(s)xds\\
&=\hat{F}(\hat{V}^{-1}x).
\end{split}
\end{equation*}
  It follows that 
the next generation matrix for system \eqref{eqn1.1} is 
$\hat{F}\hat{V}^{-1}$. Accordingly, we define  the basic reproduction number for system \eqref{eqn1.1} as the spectral radius of 
$\hat{F}\hat{V}^{-1}$, that is,
\begin{equation}\label{ratio}
\mr_0=r\big(\hat{F}\hat{V}^{-1}\big).
\end{equation}

	Let $\lambda^{*}=s(F-V)$ be  the stability modulus of linear system \eqref{eqn1.1}.  Then we have the following stability result
	in terms of $\mr_0$.
	
\begin{theorem}
$\mr_0-1$ and $\lambda^{*}$ have the same sign.
\label{thm-1}
\end{theorem}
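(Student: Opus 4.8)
The plan is to reduce this statement to Theorem \ref{stability} by rewriting system \eqref{eqn1.1} in the form $\frac{du}{dt} = L(u_t)$ with $L = F - V$ satisfying condition (K), and then identifying $\hat L = \hat F - \hat V$. First I would observe that since $-V$ satisfies (K) by assumption (A2), and $F$ is positive by (A1) — so that $F_i(\phi) \ge 0$ for all $\phi \ge 0$, in particular when $\phi_i(0) = 0$ — the operator $L := F - V$ satisfies condition (K): indeed $L_i(\phi) = F_i(\phi) + (-V)_i(\phi) \ge 0$ whenever $\phi \ge 0$ and $\phi_i(0) = 0$. Hence Theorem \ref{stability} applies to \eqref{eqn1.1} and yields that $s(F-V) = s(L)$ and $s(\hat L) = s(\hat F - \hat V)$ have the same sign. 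Since $\lambda^* = s(F-V)$ by definition, it remains to show that $s(\hat F - \hat V)$ and $\mathcal{R}_0 - 1 = r(\hat F \hat V^{-1}) - 1$ have the same sign.

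The second half follows from the classical ODE theory of the basic reproduction number. The matrix $\hat V$ arises from $-V$, which satisfies (K) and $s(-V) < 0$; one checks that $-\hat V = \widehat{-V}$ has the same sign of stability modulus as $-V$ (this is again Theorem \ref{stability} applied to the internal system \eqref{internal_sys}), so $s(-\hat V) < 0$, i.e. $\hat V$ is a nonsingular $M$-matrix with nonnegative inverse. Meanwhile $\hat F$ is a nonnegative matrix because $F(C_+) \subset \mathbb{R}^m_+$ implies $\hat F x = F(\hat x) \ge 0$ for $x \ge 0$. Thus the pair $(\hat F, \hat V)$ satisfies exactly the hypotheses of \cite[Theorem 2]{DriesscheWatmough2002} (cf. \cite[Theorem 2.3]{ThiemeSIAP}), which gives
\begin{equation*}
sign\big(r(\hat F \hat V^{-1}) - 1\big) = sign\big(s(\hat F - \hat V)\big).
\end{equation*}
Combining this with the sign equality $sign(\lambda^*) = sign(s(\hat F - \hat V))$ from the first paragraph completes the proof.

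I expect the only genuine subtlety to be the verification that $\hat V$ inherits $s(-\hat V) < 0$ from $s(-V) < 0$; but this is immediate from Theorem \ref{stability} applied to the cooperative operator $-V$ (condition (K) for $-V$ is part of (A2)), since that theorem gives $sign(s(-V)) = sign(s(-\hat V))$, forcing $s(-\hat V) < 0$. Everything else is bookkeeping: checking (K) for $F - V$, checking nonnegativity of $\hat F$, and quoting the ODE result. No compactness or spectral-mapping arguments beyond what is already established in Theorem \ref{stability} are needed, so the proof is short.
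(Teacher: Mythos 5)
Your proposal is correct and follows essentially the same route as the paper's own proof: apply Theorem \ref{stability} with $L=F-V$ to equate $sign(\lambda^*)$ with $sign(s(\hat F-\hat V))$, then invoke \cite[Theorem 2]{DriesscheWatmough2002} to equate the latter with $sign(\mathcal{R}_0-1)$. The only difference is that you spell out the hypothesis checks (condition (K) for $F-V$, nonnegativity of $\hat F$, and that $\hat V$ is a nonsingular $M$-matrix via Theorem \ref{stability} applied to $-V$) which the paper leaves implicit; these verifications are all correct.
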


\begin{proof}
Since $\mr_0=r\big(\hat{F}\hat{V}^{-1}\big)$, it follows from \cite[Theorem 2]{DriesscheWatmough2002} (see also \cite[Theorem 2.3]{ThiemeSIAP}) that 
 $$sign(\mr_0-1)=sign (s(\hat{F}-\hat{V})).$$
By Theorem \ref{stability} with $L=F-V$, we see that 
 $$
 sign(s(F-V))=sign(s(\widehat{F-V}))=sign(s(\hat{F}-\hat{V})).
$$
Thus,  we have $sign(\mr_0-1)=sign(s(F-V))=\lambda^{*}$.
\end{proof}

Let $Q_{\mu}(t)$ be the solution semigroup of the following linear system 
with parameter $\mu>0$:
\begin{equation}
\frac{du}{dt}=\frac{1}{\mu}F(u_t)-V(u_t)
\label{eqn2}
\end{equation}	
and $\lambda^*(\mu)=s(\frac{1}{\mu}F-V)$ be its stability modulus. Clearly,
$Q_{1}(t)=Q(t)$ and $\lambda^*(1)=\lambda^*$.
As a consequence of Theorem \ref{thm-1}, we have the following observation.
\begin{theorem}
	 If $\mr_0>0$, then $\mu=\mr_0$ is the unique positive solution of 
	$r(Q_{\mu}(t_0))=1$ for any given $t_0>0$, and also 
	 the unique positive solution of $\lambda^*(\mu)=0$.
\end{theorem}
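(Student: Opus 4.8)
The plan is to reduce everything to Theorem~\ref{thm-1} applied with $F$ replaced by $\frac{1}{\mu}F$. First I would observe that, since $F(C_+)\subset\mathbb{R}^m_+$ and $\mu>0$, the pair $(\frac{1}{\mu}F, V)$ still satisfies (A1)--(A2), so the basic reproduction number of system~\eqref{eqn2} is well defined and equals $r\big(\widehat{\tfrac{1}{\mu}F}\,\hat V^{-1}\big)=r\big(\tfrac{1}{\mu}\hat F\hat V^{-1}\big)=\tfrac{1}{\mu}\mathcal{R}_0$, using the elementary scaling property of the spectral radius. Applying Theorem~\ref{thm-1} to system~\eqref{eqn2} then gives that $\tfrac{1}{\mu}\mathcal{R}_0-1$ and $\lambda^*(\mu)$ have the same sign; simultaneously, the spectral mapping identity $r(Q_\mu(t_0))=e^{\lambda^*(\mu)t_0}$ (from \cite[Lemma 7.6.1]{Hale1993}, exactly as used in the proof of Theorem~\ref{stability}) gives $\mathrm{sign}(r(Q_\mu(t_0))-1)=\mathrm{sign}(\lambda^*(\mu))$. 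Combining, for every $\mu>0$ and every $t_0>0$,
\begin{equation*}
\mathrm{sign}(r(Q_\mu(t_0))-1)=\mathrm{sign}(\lambda^*(\mu))=\mathrm{sign}\!\left(\frac{\mathcal{R}_0}{\mu}-1\right)=\mathrm{sign}(\mathcal{R}_0-\mu).
\end{equation*}

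From this sign identity the conclusion is immediate: when $\mathcal{R}_0>0$, the quantity $\mathcal{R}_0-\mu$ is positive for $\mu<\mathcal{R}_0$, negative for $\mu>\mathcal{R}_0$, and zero exactly at $\mu=\mathcal{R}_0$; hence $r(Q_\mu(t_0))=1$ holds if and only if $\mu=\mathcal{R}_0$, and likewise $\lambda^*(\mu)=0$ holds if and only if $\mu=\mathcal{R}_0$. This proves both uniqueness and existence of the positive solution in one stroke, for an arbitrary fixed $t_0>0$.

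I expect the only point requiring a little care to be the verification that $\mathcal{R}_0$ for~\eqref{eqn2} is indeed $\mathcal{R}_0/\mu$ — that is, checking that the construction of the next generation matrix in the paragraph preceding~\eqref{ratio} goes through verbatim with $F$ scaled by $1/\mu$ (it does, since $V$ and hence $X(t)$, $\hat V^{-1}$ are unchanged, and the new-infection computation is linear in $F$). Everything else is a direct invocation of Theorem~\ref{thm-1} and the spectral mapping theorem, so there is no genuine obstacle; the statement is essentially a monotonicity-in-$\mu$ repackaging of the threshold result already established.
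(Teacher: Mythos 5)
Your proposal is correct and follows essentially the same route as the paper: both establish the scaling identity $\mathcal{R}_0(\mu)=\mathcal{R}_0/\mu$, then apply Theorem \ref{thm-1} to system \eqref{eqn2} together with the spectral mapping theorem to obtain the sign chain $\mathrm{sign}(\mathcal{R}_0(\mu)-1)=\mathrm{sign}(\lambda^*(\mu))=\mathrm{sign}(r(Q_\mu(t_0))-1)$, from which both uniqueness claims follow. Your added care in checking that the next generation construction is linear in $F$ is a welcome, if minor, elaboration of what the paper leaves implicit.
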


\begin{proof}
Let $\mr_0 (\mu)$ be the basic reproduction number of system \eqref{eqn2}.
Clearly, there holds
$$
\mr_0(\mu)=\frac{1}{\mu}\mr_0, \, \, \forall \mu>0.
$$
Since $\mr_0>0$,  $\mr_0$ is the unique solution of $\mr_0(\mu)=1$.
Let $t_0>0$ be given. 
By Theorem \ref{thm-1} and the spectral mapping theorem for linear autonomous FDEs (see \cite[Lemma 7.6.1]{Hale1993}), it then follows
that 
$$
sign(\mr_0(\mu)-1)=sign(\lambda^*(\mu))=sign(r(Q_{\mu}(t_0))-1).
$$
This implies two desired statements.
\end{proof}
	
\begin{remark}
In view of \eqref{ratio} and the theory of $\mr_0$ for ODE systems in \cite{DriesscheWatmough2002}, it follows that 
	$\mr_0$ defined for FDE system \eqref{eqn1.1} 
	is  also the basic reproduction number of the following ODE system:
	\[
	\frac{du}{dt}=\hat{F}u-\hat{V}u=\big(\hat{F}-\hat{V}\big)u.
	\]
\end{remark}	

\begin{remark}
In the case where $\tau=0$, $\mr_0$ in \eqref{ratio} reduces to the formula of $\mr_0$ for ODE systems in \cite{DriesscheWatmough2002}, and in the case where 
$V(\phi)=V\phi(0)$ for a square matrix $V$, 
$\mr_0$ in \eqref{ratio} is also consistent with the formula of $\mr_0$ 
given in \cite[Corollary 2.1]{ZhaoJDDE}.
\end{remark}	

\section{Abstract autonomous FDEs}

In this section, we extend the results in section 2 to a large class of abstract  autonomous FDEs. We start with a simple  observation on the spectral radius of positive linear operators.

\begin{lemma} \label{radius}
Let $(W,P)$ be an ordered Banach space with  the positive cone $P$
being solid (i.e., $Int(P)\ne \emptyset$), and $M$ be a bounded and linear operator on $W$. If $M$ is positive (i.e., $M(P)\subset P$) and $Me\gg 0$ for some $e\gg 0$, then $r(M)>0$.
\end{lemma}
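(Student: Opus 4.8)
The plan is to convert the hypothesis $Me\gg 0$ into a scalar comparison $Me\ge\lambda_0 e$ with $\lambda_0>0$, to iterate it using the positivity of $M$, and then to extract from the resulting order inequality a lower bound on $r(M)$. For the first step, since $Me\in Int(P)$ there is $\delta>0$ with the open ball $B(Me,\delta)\subseteq P$; as $\|Me-(Me-\lambda e)\|=\lambda\|e\|$ and $\|e\|>0$ (since $e\gg 0$ forces $e\neq 0$), the choice $\lambda_0:=\delta/(2\|e\|)>0$ gives $Me-\lambda_0 e\in P$, i.e. $Me\ge\lambda_0 e$. Because $M$ is positive it is monotone ($x\ge y$ implies $M(x-y)\in P$, hence $Mx\ge My$), so iterating yields $M^n e\ge\lambda_0^n e$ for all $n\ge 0$.

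The only real difficulty is to pass from this order inequality to a statement about $r(M)$, because the cone $P$ is not assumed normal, so an order inequality need not give a norm inequality directly. I would argue by contradiction. Suppose $r(M)<\lambda_0$. Then $\lambda_0$ lies in the resolvent set of $M$, the Neumann series $\sum_{n\ge 0}\lambda_0^{-(n+1)}M^n$ converges in operator norm, and $w:=(\lambda_0 I-M)^{-1}e=\sum_{n=0}^{\infty}\lambda_0^{-(n+1)}M^n e\in W$. For each $N$ the partial sum $w_N:=\sum_{n=0}^{N}\lambda_0^{-(n+1)}M^n e$ satisfies $w_N\ge\sum_{n=0}^{N}\lambda_0^{-(n+1)}\lambda_0^n e=\tfrac{N+1}{\lambda_0}e$ by the previous step, while $w-w_N=\sum_{n>N}\lambda_0^{-(n+1)}M^n e$ is a norm limit of finite positive combinations of the elements $M^n e\in P$, hence $w-w_N\in P$ since $P$ is closed. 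Therefore $w\ge\tfrac{N+1}{\lambda_0}e$, that is $\tfrac{\lambda_0}{N+1}w-e\in P$, for every $N$. Letting $N\to\infty$ and using once more that $P$ is closed gives $-e\in P$; together with $e\in P$ and properness of the cone ($P\cap(-P)=\{0\}$) this forces $e=0$, contradicting $e\gg 0$. Hence $r(M)\ge\lambda_0>0$.

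An equivalent way to carry out the last step is via a positive functional: Hahn--Banach separation of $-e$ from the closed proper cone $P$ produces $\varphi\in W^{*}$ with $\varphi(P)\subseteq[0,\infty)$ and $\varphi(e)>0$, and then $M^n e\ge\lambda_0^n e$ gives $\varphi(M^n e)\ge\lambda_0^n\varphi(e)$, so $\|M^n\|\ge\lambda_0^n\varphi(e)/(\|\varphi\|\,\|e\|)$ and Gelfand's formula $r(M)=\lim_{n\to\infty}\|M^n\|^{1/n}$ again yields $r(M)\ge\lambda_0$. Either route makes clear that the crux is transferring the order information $M^n e\ge\lambda_0^n e$ into spectral information in the absence of normality of $P$; the remaining steps are routine.
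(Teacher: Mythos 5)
Your proposal is correct, and your primary argument is essentially the paper's: both proofs first convert $Me\gg 0$ into a scalar comparison $Me\ge\lambda_0 e$ with $\lambda_0>0$, assume $r(M)<\lambda_0$ for contradiction, and then use the norm-convergent Neumann series for the resolvent together with positivity of the powers $M^n$ to conclude $-e\in P$, contradicting $e\gg 0$. The only cosmetic difference is in how the contradiction is extracted: the paper passes to a power $\bar M=(\lambda_0^{-1}M)^{n_0}$ with $\|\bar M\|<1$ and writes $e=(I-\bar M)^{-1}(-h)=-\sum_i\bar M^i h\le 0$ with $h=\bar M e-e\ge 0$, whereas you keep $M$ itself and let the lower bound $\tfrac{N+1}{\lambda_0}e$ on the partial sums diverge; both hinge on the same positivity of the Neumann series. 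Your second route, via a Hahn--Banach positive functional $\varphi$ with $\varphi(e)>0$ and Gelfand's formula, is genuinely different: it is direct rather than by contradiction, makes explicit that normality of $P$ is not needed (only closedness and properness), and gives the quantitative bound $r(M)\ge\lambda_0$ without invoking the resolvent at all. Either version is a complete proof.
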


\begin{proof}
Since 	$\lim_{\delta \to 0}\delta e=0\ll Me$,  we can fix a sufficiently small number $\delta >0$ such that $Me\gg \delta e$. Now we show that $r(M)\geq \delta$. Assume, by contradiction, that $r(M)<\delta$. Since $$
\lim_{n\to \infty}
\|(\frac 1{\delta}M)^n\|^{\frac 1n}=r(\frac 1{\delta}M)=\frac 1{\delta}r(M)<1,$$
  there exists $n_0>0$ such that 
$\|(\frac 1{\delta}M)^{n}\|<1$ for all $n\geq n_0$.  Let $\bar{M}=
(\frac 1{\delta}M)^{n_0}$. Clearly,  $\|\bar{M}\|<1$ and
$h:=\bar{M}e- e\gg 0$. Since $(I-\bar{M})e=-h$,
it follows that 
$$
e=(I-\bar{M})^{-1}(-h)=-\sum_{i=0}^{\infty}\bar{M}^i(h)\leq 0,
$$
which contradicts $e\gg 0$.
\end{proof}

Let $(X, X_+)$ be an ordered Banach space with the positive cone $X_+$
being normal and solid. Let $\tau\in\rbb_{+}$ be  given, and $E=C([-\tau, 0], X)$ equipped with the maximum norm $\|\cdot\|_{E}$ and the positive cone $E_{+}=C([-\tau, 0], X_+)$. Then $(E, E_+)$ is an ordered Banach space. Let  $\ml(E, X)$ be the space of all bounded and linear operators from $E$ to $X$.
For any  $L\in  \ml(E, X)$, we define $\hat{L}\in  \ml(X, X)$ by
$$
\hat{L}x=L(\hat{x}), \quad  \forall x\in X,
$$
where $\hat{x}(\theta)=x, \forall \theta\in [-\tau,0]$.

We consider the following abstract autonomous FDE:
\begin{equation}
\dfrac{du(t)}{dt}=Au(t)+B(u_t)
\label{Abs1}
\end{equation}
where $A$ is a closed linear operator in $X$ with a dense domain 
$D(A)$ and $B\in  \ml(E, X)$.  Assume that 
\begin{enumerate}
	\item[(H)] $A: D(A)\to X$ generates a strongly continuous positive semigroup $T_A(t)$ on $X$, 
	and $B$ is positive in the sense that $B(E_+)\subset X_+$.
\end{enumerate}

By the standard semigroup theory (see, e.g., \cite{MS1990}), it follows that 
for any $\phi\in E$, system \eqref{Abs1} has a unique mild
solution $u(t,\phi)$ on $[0,\infty)$ with $u_0=\phi$, and its solution
maps generate a positive semigroup $\mathcal{T}(t)$ on $E$. 

Recall that  the exponential growth bound of the semigroup $\mathcal{T}(t)$ is defined as
$$\omega(\mathcal{T})=\inf\{\tilde{\omega}: \exists M\geq 1\ \text{such that $\|\mathcal{T}(t)\|\leq Me^{\tilde{\omega}t}$, $\forall  t\geq 0$}\},
$$
and  the spectral bound of  the closed linear operator $A+\hat{B}$ in $X$
is defined as 
$$
s(A+\hat{B})=\sup\{Re\, \lambda:   \, \, \lambda\in \sigma (A+\hat{B})\},
$$
where $\sigma (A+\hat{B})$ is  the spectrum of  $A+\hat{B}$.

The following result is a generalization of Theorem \ref{stability} to  
abstract FDE \eqref {Abs1} on $X$.

\begin{theorem}\label{stabilityAbs}
	Assume that $A$ and $B$ satisfy (H), and $T_A(t)$ is compact on 
	$X$ for each $t>0$. Then $\omega(\mathcal{T})$ and 
	$s(A+\hat{B})$  have the same sign.
\end{theorem}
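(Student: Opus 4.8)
The plan is to reduce Theorem \ref{stabilityAbs} to the finite-dimensional result Theorem \ref{stability} via the same ``next generation operator'' bookkeeping used in the proof of Theorem \ref{thm-1}, but now carried out in the abstract setting. First I would fix $t_0>0$ and invoke the spectral mapping / growth-bound theory for strongly continuous semigroups: since $T_A(t)$ is compact for $t>0$, the solution semigroup $\mathcal{T}(t)$ on $E$ is eventually compact, so $\omega(\mathcal{T})$ is determined by the point spectrum and one has $r(\mathcal{T}(t_0))=e^{\omega(\mathcal{T})t_0}$; hence $\mathrm{sign}(\omega(\mathcal{T}))=\mathrm{sign}(r(\mathcal{T}(t_0))-1)$. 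The parallel fact on the $X$ side is that $s(A+\hat{B})$ is determined by the spectrum of the compact semigroup generated by $A+\hat{B}$, giving $\mathrm{sign}(s(A+\hat{B}))=\mathrm{sign}(r(e^{(A+\hat B)t_0})-1)$.

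Next I would split $B$ into a ``positive'' part and a ``negative'' part so that the $\mathcal{R}_0$ machinery of \cite{LZZJDDE19} (or \cite{HWZJDE}) applies. Here $B$ is already positive by (H), so unlike in the $\mathbb{R}^m$ case there is essentially nothing to subtract: one takes $\mathcal{F}=B$ as the new-infection operator and $-A$ (shifted, if necessary, by a spectral bound to make it invertible with positive resolvent) as the transition operator, using that $T_A(t)$ is a strongly continuous positive semigroup so $A$ generates the internal dynamics in the required sense. Regarding system \eqref{Abs1} as a $t_0$-periodic abstract FDE, \cite[Corollary A.8]{HWZJDE} (equivalently the autonomous specialization of \cite{LZZJDDE19}) yields $\mathrm{sign}(r(\mathcal{T}(t_0))-1)=\mathrm{sign}(r(\hat{\mathcal{F}}\mathcal{V}^{-1})-1)$ with $\hat{\mathcal F}=\hat B$ and $\mathcal V=-A$ (after the shift); combined with the first paragraph this gives $\mathrm{sign}(\omega(\mathcal T))=\mathrm{sign}(r(\hat B(-A)^{-1})-1)$.

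It then remains to identify $\mathrm{sign}(r(\hat B(-A)^{-1})-1)$ with $\mathrm{sign}(s(A+\hat B))$. This is the abstract analogue of the ODE fact \cite[Theorem 2]{DriesscheWatmough2002} used in \eqref{ODER0}, and the right reference is Thieme's infinite-dimensional version \cite[Theorem 2.3]{ThiemeSIAP} (or \cite{ThZh}): writing $A+\hat B = -\mathcal V + \hat{\mathcal F}$ with $\mathcal V=-A$ having negative spectral bound and $\hat{\mathcal F}=\hat B\ge 0$, one gets $\mathrm{sign}(s(A+\hat B))=\mathrm{sign}(s(-\mathcal V+\hat{\mathcal F}))=\mathrm{sign}(r(\hat{\mathcal F}\mathcal V^{-1})-1)$. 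Chaining the three sign equalities closes the argument. The main obstacle I anticipate is the invertibility/positivity bookkeeping in the second step: $-A$ need not have a negative spectral bound, so I would first pass to $A-\lambda_0 I$ and $B$ for a large $\lambda_0$ (which shifts both $\omega(\mathcal T)$ and $s(A+\hat B)$ by the same $-\lambda_0$ and preserves all positivity), ensuring $\mathcal V=\lambda_0 I-A$ is boundedly invertible with $\mathcal V^{-1}=\int_0^\infty e^{-\lambda_0 t}T_A(t)\,dt\ge 0$; verifying that the hypotheses of \cite{HWZJDE} and of \cite{ThiemeSIAP} are met under exactly (H) plus compactness of $T_A(t)$ — in particular the resolvent-positivity and the compactness needed for the spectral mapping theorems — is where the real care is required.
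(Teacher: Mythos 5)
Your overall architecture (growth bound $\leftrightarrow r(\mathcal{T}(t_0))$, then the periodic $\mathcal{R}_0$ theory of \cite{LZZJDDE19,HWZJDE}, then Thieme's spectral-bound theorem) matches the paper's, but the middle step contains a genuine gap in how you handle the case $s(A)\ge 0$. You propose to take $\mathcal{F}=B$ and $\mathcal{V}=-A$, and when $-A$ fails to have negative spectral bound you ``pass to $A-\lambda_0 I$ and $B$.'' That replaces the equation by a different one: the new solution semigroup is $e^{-\lambda_0 t}\mathcal{T}(t)$ and the new generator on the $X$ side is $A+\hat B-\lambda_0 I$, so both $\omega$ and $s$ drop by $\lambda_0$. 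The $\mathcal{R}_0$ machinery only yields sign information, so what you would actually prove is $\mathrm{sign}(\omega(\mathcal{T})-\lambda_0)=\mathrm{sign}(s(A+\hat B)-\lambda_0)$ for one large $\lambda_0>0$; both sides are then simply negative, and this says nothing about the signs of $\omega(\mathcal{T})$ and $s(A+\hat B)$ themselves.

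The fix, which is exactly the paper's device, is to leave the equation untouched and redistribute the shift between the two halves of the decomposition: choose $b>0$ with $\omega(e^{-bt}T_A(t))<0$ and set $F(\phi)=b\phi(0)+B(\phi)$ and $V=bI-A$, so that $F(u_t)-Vu(t)=Au(t)+B(u_t)$ identically. Then $F$ is still positive, $(bI-A)^{-1}=\int_0^\infty e^{-bt}T_A(t)\,dt\ge 0$, and both \cite[Theorem 3.7 and Proposition 3.9]{LZZJDDE19} on the semigroup side and \cite[Theorem 3.5]{ThiemeSIAP} (the resolvent-positive version, not the Theorem 2.3 you cite) with $\mathcal{C}=bI+\hat B$ and $\mathcal{B}=A-bI$ apply to the \emph{original} quantities, giving $\mathrm{sign}(\omega(\mathcal{T}))=\mathrm{sign}\big(r((bI+\hat B)\circ(bI-A)^{-1})-1\big)=\mathrm{sign}(s(A+\hat B))$. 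One further point you omit: invoking the results of \cite{LZZJDDE19} requires knowing $r((bI+\hat B)\circ(bI-A)^{-1})>0$, which the paper obtains from Lemma \ref{radius} together with the fact that $(bI-A)^{-1}e\gg 0$ for $e\in Int(X_+)$.
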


\begin{proof}
We first choose  a large real number $b>0$ such that the exponential 
growth bound of the semigroup $e^{-bt}T_A(t)$ is negative. Let
$-V:=A-bI$ and $F(\phi):=b\phi (0)+B(\phi), \forall \phi\in E$. 
Then we
can write equation \eqref{Abs1} as 
\begin{equation}
\dfrac{du(t)}{dt}=F(u_t)-Vu(t).
\label{Abs2}
\end{equation}
For any given $t_0>0$, we regard  \eqref{Abs1}  as an
$t_0$-periodic equation.  In view of \cite[Proposition A.2]{ThiemeSIAP},
we have 
$$
\omega(\mathcal{T})=\frac{\ln r(\mathcal{T}(t_0))}{t_0}.
$$
We fix an element $e\in Int(X_+)$. Since $\lim_{t\to 0^+}T_A(t)e=
e\gg 0$, there exists $\delta >0$ such that $T_A(t)e\gg 0$ for 
all $t\in [0,\delta]$. From \cite[Theorem 3.12]{ThiemeSIAP}, we further see that 
$$
(bI-A)^{-1}e=-(A-bI)^{-1}e=\int_0^{\infty}e^{-bt}T_A(t)edt\gg 0.
$$
This, together with Lemma \ref{radius}, implies that 
$r((bI+\hat{B})\circ (bI-A)^{-1})>0$.
Let $B_{t_0}$ be the ordered Banach space of all continuous and 
$t_0$-periodic functions from $\mathbb{R}$ to $X$. Following
\cite{LZZJDDE19}, we define a linear  operator $\tilde{\mathfrak{L}}$ on $B_{t_0}$ by 
$$
[\tilde{\mathfrak{L}}(v)](t)=F\int_0^{\infty}e^{-bs}T_A(s)v(t-s+\cdot)ds,
\quad \forall t\in \mathbb{R}, \,  v\in B_{t_0}.
$$
It is easy to see that 
$$
\tilde{\mathfrak{L}}(x)=F\int_0^{\infty}e^{-bs}T_A(s)xds=
(bI+\hat{B})\circ (bI-A)^{-1}x, \, \,
\forall x\in X.
$$
In view of  \cite[Lemma 2.4]{LZZJDDE19}, we obtain
$r(\tilde{\mathfrak{L}})=r((bI+\hat{B})\circ (bI-A)^{-1})$.
By \cite[Theorem 3.7 and Proposition 3.9]{LZZJDDE19}, it then follows that 
\begin{equation}\label{AbsHWZ}
sign(\omega(\mathcal{T}))=sign(r(\mathcal{T}(t_0))-1)=
sign(r((bI+\hat{B})\circ (bI-A)^{-1})-1).
\end{equation}
Let $\mathcal{A}=A+\hat{B}$, $\mathcal{B}=A-bI$ and $\mathcal{C}=bI+\hat{B}$. Clearly, $\mathcal{A}=\mathcal{C}+\mathcal{B}$ and 
$s(\mathcal{B})<0$.  Since $\mathcal{A}$ and $\mathcal{B}$ generates positive  $C_0$-semigroups on $X$, \cite[Theorem 3.12]{ThiemeSIAP} implies that they are resolvent-positive. Thus,  \cite[Theorem 3.5]{ThiemeSIAP} gives rise to
\begin{equation}\label{AbsThieme}
sign(s(A+\hat{B}))=sign (s(\mathcal{A}))=sign (r(-\mathcal{C}\mathcal{B}^{-1})-1).
\end{equation}
Since $-\mathcal{C}\mathcal{B}^{-1}=(bI+\hat{B})\circ (bI-A)^{-1}$, it 
follows from \eqref{AbsHWZ} and \eqref{AbsThieme} that $\omega(\mathcal{T})$ and $s(A+\hat{B})$  have the same sign.
\end{proof}

Let $T_N(t)$ be the strongly continuous semigroup generated by 
a closed linear operator $N$ in $X$ with a dense domain $D(N)$, and
$L, \mathcal{F}\in  \ml(E, X)$. Next, we consider the following autonomous linear  FDE:
\begin{equation}\label{Abs3}
\dfrac{du}{dt}=Nu(t)+L(u_t)+\mathcal{F}(u_t)=
\mathcal{F}(u_t)-\mathcal{V}(u_t)
\end{equation}
where  $-\mathcal{V}\phi:=N\phi(0)+L\phi$.  It then follows that 
\eqref{Abs3} generates a semigroup $U(t)$ on $E$, and the autonomous
linear FDE
\begin{equation}\label{Abs4}
\dfrac{du}{dt}=-\mathcal{V}(u_t)
\end{equation}
generates a semigroup $\Psi(t)$ on $E$, respectively. 

To introduce the basic reproduction number for system \eqref{Abs3}, throughout this section we assume that 
\begin{enumerate}
	\item[(H1)] $\mathcal{F}$ is positive in the sense that 
	$\mathcal{F}(E_+)\subset X_+$.
	\item[(H2)] $T_N(t)$ is a positive semigroup, $L$ is positive,  and $\omega(\Psi)<0$.
	\end{enumerate}

With the help of \cite[Theorem 1]{SchJDE}, as applied to \eqref{Abs4}, we may use the essentially same arguments
as those for \eqref {eqn1.1} to derive the next generation operator as $\hat{\mathcal{F}}\circ\hat{\mathcal{V}}^{-1}$, where 
 $-\hat{\mathcal{V}}:=N+\hat{L}$. Thus, we define the basic reproduction 
 number for system  \eqref{Abs3} to be 
$\mathcal{R}_0=r(\hat{\mathcal{F}}\circ\hat{\mathcal{V}}^{-1})$.

The following result shows that the stability of the zero solution for system  \eqref{Abs3} can be 
determined by the sign of $\mr_0-1$.

\begin{theorem}	\label{AbsTh1R0}
Assume that $T_N(t)$ is compact on 
$X$ for each $t>0$.  Then $\mr_0-1$ and $\omega(U)$ have the same sign.
\end{theorem}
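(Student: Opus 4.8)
The plan is to mimic the proof of Theorem~\ref{thm-1}, replacing the finite-dimensional ingredients by their abstract counterparts that have already been assembled in this section. The key observation is that $\mr_0 = r(\hat{\mathcal{F}}\circ\hat{\mathcal{V}}^{-1})$ where $-\hat{\mathcal{V}} = N + \hat{L}$, so that the operator $\hat{\mathcal{F}} - \hat{\mathcal{V}} = N + \hat{L} + \hat{\mathcal{F}}$ is exactly the operator $A + \hat{B}$ of Theorem~\ref{stabilityAbs} with the identification $A = N$ and $B = L + \mathcal{F}$. Thus equation~\eqref{Abs3} is a special case of equation~\eqref{Abs1}, and under the hypothesis that $T_N(t)$ is compact for each $t>0$, Theorem~\ref{stabilityAbs} applies and yields
\begin{equation*}
sign(\omega(U)) = sign(s(N + \hat{L} + \hat{\mathcal{F}})) = sign(s(\hat{\mathcal{F}} - \hat{\mathcal{V}})).
\end{equation*}

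It then remains to connect $s(\hat{\mathcal{F}} - \hat{\mathcal{V}})$ to $\mr_0 - 1$. Here I would invoke the splitting $\hat{\mathcal{F}} - \hat{\mathcal{V}} = \hat{\mathcal{F}} + (N + \hat{L})$ and apply the abstract perturbation result \cite[Theorem 3.5]{ThiemeSIAP} with $\mathcal{A} = \hat{\mathcal{F}} - \hat{\mathcal{V}}$, $\mathcal{B} = -\hat{\mathcal{V}} = N + \hat{L}$ and $\mathcal{C} = \hat{\mathcal{F}}$, exactly as in the proof of Theorem~\ref{stabilityAbs}. For this I need: (i) $\mathcal{B} = N + \hat{L}$ is resolvent-positive with $s(\mathcal{B}) < 0$, which follows because $N+\hat{L}$ generates the positive semigroup associated with $\hat{\mathcal{V}}^{-1}$ and $\omega(\Psi)<0$ forces $s(N+\hat{L})<0$ (using that $\hat{L}$ is a bounded positive perturbation of $N$, and \cite[Theorem 3.12]{ThiemeSIAP} for resolvent-positivity); (ii) $\mathcal{A} = N + \hat{L} + \hat{\mathcal{F}}$ is resolvent-positive, which holds since it is again a bounded positive perturbation of the resolvent-positive $N$; and (iii) $\mathcal{C}=\hat{\mathcal{F}}$ is positive, which is (H1). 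Then \cite[Theorem 3.5]{ThiemeSIAP} gives
\begin{equation*}
sign(s(\hat{\mathcal{F}} - \hat{\mathcal{V}})) = sign(r(-\mathcal{C}\mathcal{B}^{-1}) - 1) = sign\big(r\big(\hat{\mathcal{F}}\circ(N+\hat{L})^{-1}(-1)\big) - 1\big) = sign(\mr_0 - 1),
\end{equation*}
since $-(N+\hat{L})^{-1} = \hat{\mathcal{V}}^{-1}$. Combining the two displays yields the claim.

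The main obstacle I anticipate is verifying that $s(\mathcal{B}) < 0$ rigorously, i.e.\ transferring the hypothesis $\omega(\Psi) < 0$ on the delay semigroup $\Psi(t)$ on $E$ to negativity of the spectral bound of $N + \hat{L}$ on $X$. This is essentially the content of Theorem~\ref{stabilityAbs} applied to equation~\eqref{Abs4} (with $A = N$, $B = L$): that theorem gives $sign(\omega(\Psi)) = sign(s(N+\hat{L}))$, so $\omega(\Psi)<0$ indeed implies $s(N+\hat{L})<0$, provided one checks that $T_N(t)$ being compact for each $t>0$ carries over — which it does by the same hypothesis. A secondary point to be careful about is that $-\mathcal{C}\mathcal{B}^{-1}$ must be genuinely identified with $\hat{\mathcal{F}}\circ\hat{\mathcal{V}}^{-1}$ as a bounded positive operator on $X$; this requires $0 \in \rho(\mathcal{B})$, which again is guaranteed by $s(\mathcal{B})<0$ together with resolvent-positivity. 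Once these spectral-bound bookkeeping items are in place, the argument is a direct concatenation of Theorem~\ref{stabilityAbs} and \cite[Theorem 3.5]{ThiemeSIAP}, paralleling the $\mathbb{R}^m$ case.
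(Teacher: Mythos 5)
Your proposal is correct and follows essentially the same route as the paper: the paper's proof likewise combines Theorem~\ref{stabilityAbs} (with $A=N$, $B=L+\mathcal{F}$) to get $sign(\omega(U))=sign(s(\hat{\mathcal{F}}-\hat{\mathcal{V}}))$ with \cite[Theorem 3.5]{ThiemeSIAP} (with $\mathcal{B}=-\hat{\mathcal{V}}$, $\mathcal{C}=\hat{\mathcal{F}}$) to get $sign(\mr_0-1)=sign(s(\hat{\mathcal{F}}-\hat{\mathcal{V}}))$. Your additional bookkeeping—deducing $s(N+\hat{L})<0$ from $\omega(\Psi)<0$ via Theorem~\ref{stabilityAbs} applied to \eqref{Abs4}, and checking resolvent-positivity—is left implicit in the paper but is a correct and welcome elaboration.
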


\begin{proof}
	By \cite[Theorem 3.5]{ThiemeSIAP} with 
	$\mathcal{B}=-\hat{\mathcal{V}}$ and $\mathcal{C}=\hat{\mathcal{F}}$, 
 it follows from  that 
	$$
sign(\mr_0-1)=sign(r(\hat{\mathcal{F}}\circ\hat{\mathcal{V}}^{-1})-1)
	=sign (s(\hat{\mathcal{F}}-\hat{\mathcal{V}})).
	$$
On the other hand, Theorem \ref{stabilityAbs} with 
$A=N$ and $B=L+\mathcal{F}$ implies that 
	$$
	sign(\omega(U))=sign(s(N+\hat{L}+\hat{\mathcal{F}}))=sign (s(\hat{\mathcal{F}}-\hat{\mathcal{V}})).
	$$
Thus,  we have $sign(\mr_0-1)=sign(\omega(U))$.
\end{proof}

\begin{remark}
Since $\omega(U)=\frac{\ln r(U(t_0))}{t_0}$ for any given 
$t_0>0$,  it is easy to see that  Theorem \ref{AbsTh1R0} is a
straightforward consequence of \cite[Theorem A.5, Corollary A.8 and Proposition A.9]{HWZJDE}.
\end{remark}

Let $U_{\mu}(t)$ be the solution semigroup of the following linear system 
with parameter $\mu>0$:
\begin{equation}
\frac{du}{dt}=\frac{1}{\mu}\mathcal{F}(u_t)-\mathcal{V}(u_t).
\label{Abseqn2}
\end{equation}	
Then we have the following result which characterizes $\mr_0$.

\begin{theorem}\label{AbsCompute}
	Assume that $T_N(t)$ is compact on 
	$X$ for each $t>0$. If $\mr_0>0$, then $\mu=\mr_0$ is the unique positive solution of 
	$r(U_{\mu}(t_0))=1$ for any given $t_0>0$, and also 
	the unique positive solution of $\omega(U_{\mu}) =0$.
\end{theorem}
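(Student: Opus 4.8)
The plan is to reuse the strategy behind the $\rbb^m$ counterpart of the present statement (the one relating $r(Q_\mu(t_0))$, $\lambda^*(\mu)$ and $\mu=\mr_0$ for system~\eqref{eqn2}): namely, to show that the basic reproduction number of the rescaled system is $\mu^{-1}\mr_0$, and then to transport the sign equivalences through Theorem~\ref{AbsTh1R0} together with the growth-bound identity $\omega(\mathcal{T})=\frac{\ln r(\mathcal{T}(t_0))}{t_0}$, which is valid for any $C_0$-semigroup (see \cite[Proposition A.2]{ThiemeSIAP}).

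First I would observe that, for each fixed $\mu>0$, system~\eqref{Abseqn2} is again of the form~\eqref{Abs3} with $\mathcal{F}$ replaced by $\mu^{-1}\mathcal{F}$ and with $N$, $L$, hence $\mathcal{V}$, unchanged. Since $\mu>0$, the operator $\mu^{-1}\mathcal{F}$ is still positive, so (H1) holds for~\eqref{Abseqn2}, while (H2) and the compactness of $T_N(t)$ are unaffected; thus the basic reproduction number $\mr_0(\mu)$ of~\eqref{Abseqn2} is well defined. Because the map $L\mapsto\hat{L}$ is linear and $r(cM)=c\,r(M)$ for $c\ge 0$, this yields the scaling identity
$$
\mr_0(\mu)=r\big(\mu^{-1}\hat{\mathcal{F}}\circ\hat{\mathcal{V}}^{-1}\big)=\mu^{-1}\,r\big(\hat{\mathcal{F}}\circ\hat{\mathcal{V}}^{-1}\big)=\mu^{-1}\mr_0,\qquad\forall\,\mu>0 .
$$
As $\mr_0>0$, the map $\mu\mapsto\mu^{-1}\mr_0$ is strictly decreasing on $(0,\infty)$, so $\mu=\mr_0$ is the unique positive solution of $\mr_0(\mu)=1$.

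Next I would apply Theorem~\ref{AbsTh1R0} to system~\eqref{Abseqn2} to get $\mathrm{sign}(\mr_0(\mu)-1)=\mathrm{sign}(\omega(U_\mu))$ for every $\mu>0$, and combine it with $\mathrm{sign}(\omega(U_\mu))=\mathrm{sign}(r(U_\mu(t_0))-1)$, which holds for each $t_0>0$ by the growth-bound identity. Chaining these with the scaling identity gives
$$
\mathrm{sign}(\mu^{-1}\mr_0-1)=\mathrm{sign}(\omega(U_\mu))=\mathrm{sign}(r(U_\mu(t_0))-1),\qquad\forall\,\mu>0,\ t_0>0 ,
$$
and since the left-hand side vanishes exactly at $\mu=\mr_0$, the two uniqueness assertions follow at once. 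I do not anticipate a genuine obstacle: the argument is essentially bookkeeping, and the only point deserving a moment's care is verifying that hypotheses (H1), (H2) and the compactness of $T_N(t)$ transfer to the rescaled system~\eqref{Abseqn2} so that Theorem~\ref{AbsTh1R0} is legitimately applicable — which they do, because only $\mathcal{F}$ is multiplied by a positive scalar while $N$, $L$ and $\mathcal{V}$ are left untouched.
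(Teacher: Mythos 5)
Your proposal is correct and follows essentially the same route as the paper's own proof: establish the scaling identity $\mr_0(\mu)=\mu^{-1}\mr_0$, invoke Theorem~\ref{AbsTh1R0} applied to the rescaled system, and chain the sign equalities through the growth-bound identity $\omega(U_{\mu})=\frac{\ln r(U_{\mu}(t_0))}{t_0}$. Your explicit check that (H1), (H2) and the compactness hypothesis carry over to system~\eqref{Abseqn2} is a small point of care the paper leaves implicit, but the argument is the same.
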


\begin{proof}
	Let $\mr_0 (\mu)$ be the basic reproduction number of system \eqref{Abseqn2}. It then follows that 
	$$
	\mr_0(\mu)=\frac{1}{\mu}\mr_0, \, \, \forall \mu>0.
	$$
	Since $\mr_0>0$,  $\mr_0$ is the unique solution of $\mr_0(\mu)=1$.
	For any given  $t_0>0$, there holds  $\omega(U_{\mu})=\frac{\ln r(U_{\mu}(t_0))}{t_0}$.
	In view of Theorem \ref{AbsTh1R0}, we have 
	$$
	sign(\mr_0(\mu)-1)=sign(\omega(U_{\mu}))=sign(r(U_{\mu}(t_0))-1).
	$$
	This gives rise to the desired two statements.
\end{proof}

\begin{remark}
	For any given $\mu>0$, we can use \cite[Lemma 2.5]{LZZJDDE19}
	to compute $r(U_{\mu}(t_0))$ numerically. Thus, the method of bisection can be employed to solve 
	$r(U_{\mu}(t_0))=1$, which gives the value of $\mr_0$
	due to Theorem \ref{AbsCompute}.
\end{remark}	

\begin{remark}
	In Theorems \ref {stabilityAbs}, \ref{AbsTh1R0} and \ref{AbsCompute},
	the compactness condition for the semigroups $T_A(t)$ and $T_N(t)$
	can be weakened via the verification of assumptions (H3), (H4) and (H5)
	in \cite{LZZJDDE19} (see also (C3), (C4) and (C5) in \cite{HWZJDE}).
\end{remark}	

\section{An application}

In this section, we apply the theory of $\mathcal{R}_0$ in section 2 to a time-delayed population model and obtain a threshold type result on its global dynamics in terms of 
$\mathcal{R}_0$.  Clearly,  one can also apply the theory of 
$\mathcal{R}_0$ in section 3 to some reaction-diffusion 
models with time delay. 

Gourley et al.  \cite{Gourley2018} proposed a nonlocal spatial model of black-legged ticks to study the role of white-tailed deer in their
geographic spread. The spatially homogeneous version of this model 
is governed by the following time-delayed differential system:
\begin{align}
& L'(t)=br_4e^{-d_4\tau_1}A_f(t-\tau_1)-(d_1+r_1)L(t), \nonumber\\
&N'(t)=r_1g(L(t))-(d_2+r_2)N(t),\nonumber\\
&A_q'(t)=r_2N(t)-(d_3+r_3)A_q(t),\label{ApplE1}\\
&A_f'(t)=\frac{r_3}{2}e^{-d_3\tau_2}A_q(t-\tau_2)-(d_4+r_4)A_f(t), \nonumber
\end{align}
where  $L(t)$, $N(t)$, $A_q(t)$ and $A_f(t)$ are the population densities of larvae, nymphs, questing adults and female fed adults at time $t$, respectively.  All parameters $b$, $r_i$, $d_i$ and $\tau_i$  are positive 
numbers, and we refer to Table 1 in \cite{Gourley2018} for their biological 
meanings. The nonlinear function $g(L)=\frac{N_{cap}L}{h+L}$ with two positive constants $N_{cap}$ and $h$.

Linearizing system \eqref{ApplE1} at  $(0,0,0,0)$, we obtain the following 
linear system:
\begin{align}
& L'(t)=br_4e^{-d_4\tau_1}A_f(t-\tau_1)-(d_1+r_1)L(t), \nonumber\\
&N'(t)=r_1g'(0)L(t)-(d_2+r_2)N(t),\nonumber\\
&A_q'(t)=r_2N(t)-(d_3+r_3)A_q(t),\label{ApplE2}\\
&A_f'(t)=\frac{r_3}{2}e^{-d_3\tau_2}A_q(t-\tau_2)-(d_4+r_4)A_f(t), \nonumber
\end{align}
Let $\tau=\max\{\tau_1, \tau_2\}$  and $C=C([-\tau,0],\mathbb{R}^4)$.
We define $F, V\in \mathcal{L}(C, \mathbb{R}^4)$ as follows 
$$
F(\phi)=\left(br_4e^{-d_4\tau_1}\phi_4(-\tau_1),0, 0, 0\right)^T,
\quad  \forall \phi=(\phi_1, \phi_2, \phi_3, \phi_4)\in C,
$$ 
and
$$
V(\phi)=\left(
\begin{array}{c}
(d_1+r_1)\phi_1(0)\\
-r_1g'(0)\phi_1(0)+(d_2+r_2)\phi_2(0)\\
-r_2\phi_2(0)+(d_3+r_3)\phi_3(0)\\
-\frac{r_3}{2}e^{-d_3\tau_2}\phi_3(-\tau_2)+(d_4+r_4)\phi_4(0)
\end{array}
\right), \quad  \forall \phi=(\phi_1, \phi_2, \phi_3, \phi_4)\in C.
$$
Then system \eqref{ApplE2} is of the form \eqref{eqn1.1}. It is easy to see 
that  
$$
\hat{F}=\left(
\begin{array}{cccc}
0 & 0 &0 &br_4e^{-d_4\tau_1}  \\
0& 0 & 0  &0\\
0& 0 &0   &0\\
0& 0 & 0  &0
\end{array}
\right)
$$
and
$$
\hat{V}=\left(
\begin{array}{cccc}
d_1+r_1 & 0 &0   &0 \\
-r_1g'(0)& d_2+r_2 & 0  &0\\
0& -r_2 &d_3+r_3  &0\\
0& 0 &-\frac{r_3}{2}e^{-d_3\tau_2}  &d_4+r_4
\end{array}
\right).
$$
By \eqref{ratio} and a straightforward computation, we obtain
\begin{equation}\label{ApplR0}
\mr_0=r\big(\hat{F}\hat{V}^{-1}\big)=\frac 12 bg'(0)e^{-(d_4\tau_1+d_3\tau_2)}\prod_{i=1}^4\frac{r_i}{d_i+r_i}.
\end{equation}

Note that  the function $g(x)$ is strictly sublinear (i.e, subhomogeneous) on $[0, \infty)$ in the sense that $g(sx)>sg(x)$ for all $x>0$ and $s\in (0,1)$. It follows that the solution maps of  system \eqref{ApplE1} 
are  sublinear (subhomogeneous) on $C([-\tau,0],\mathbb{R}^4_+)$  (see \cite{ZhaoJing}).  Further, one can  easily  verify that system \eqref{ApplE1} has a unique positive equilibrium $u^*$ in the case where $\mathcal{R}_0>1$.

 As a consequence of Theorems 
\ref{stability} and \ref{thm-1} and \cite[Theorem 3.2]{ZhaoJing}, we  have the following threshold dynamics for system \eqref{ApplE1}.

\begin{theorem}\label{Exampe}
Let $\mathcal{R}_0$ be given in \eqref{ApplR0}. Then the following 
statements are valid.
\begin{enumerate}
\item[(i)] If  $\mathcal{R}_0\leq 1$, then the zero solution is globally
asymptotically stable in $C([-\tau,0],\mathbb{R}^4_+)$;

\item[(ii)] If $\mathcal{R}_0>1$, then system \eqref{ApplE1} admits
a unique positive equilibrium $u^*$ and $u^*$ is globally asymptotically 
stable in $C([-\tau,0],\mathbb{R}^4_+)\setminus \{0\}$.
\end{enumerate}
\end{theorem}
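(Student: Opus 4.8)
The plan is to apply \cite[Theorem 3.2]{ZhaoJing} to the solution semiflow $\Phi_t$ generated by system \eqref{ApplE1} on the positive cone $C_+:=C([-\tau,0],\mathbb{R}^4_+)$, the role of Theorems \ref{stability} and \ref{thm-1} being to translate the stability of the zero solution of the linearization \eqref{ApplE2} into the sign of $\mathcal{R}_0-1$. First I would record that \eqref{ApplE1} generates a well-defined semiflow on $C_+$ (standard for finite-delay FDEs) and that this semiflow is monotone: the right-hand side of \eqref{ApplE1} is quasimonotone, since each equation is nondecreasing in its off-diagonal arguments ($L'$ in $A_f(t-\tau_1)$, $N'$ in $L(t)$ because $g$ is increasing, $A_q'$ in $N(t)$, and $A_f'$ in $A_q(t-\tau_2)$), so \cite[Theorem 5.1.1]{Smith95} applies. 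Moreover, the cyclic coupling $L\to N\to A_q\to A_f\to L$ makes \eqref{ApplE2} irreducible, so $\Phi_t$ is eventually strongly monotone and, by \cite[Theorem 5.5.1]{Smith95}, $\lambda^{*}:=s(F-V)$ is the principal eigenvalue of \eqref{ApplE2} with a strongly positive eigenfunction.

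Next I would verify point dissipativity of $\Phi_t$ by a cascade of differential inequalities: since $g(L)=N_{cap}L/(h+L)\le N_{cap}$, the second equation of \eqref{ApplE1} yields an ultimate upper bound for $N(t)$; substituting it into the third equation bounds $A_q(t)$; the fourth then bounds $A_f(t)$; and finally the first bounds $L(t)$. Combined with the eventual compactness of the solution maps of a finite-delay FDE, this shows that $\Phi_t$ is point dissipative with compact orbit closures. The strict subhomogeneity of $\Phi_t$ on $C_+$ has already been observed, descending from the strict sublinearity $g(sx)>sg(x)$ for $x>0$ and $s\in(0,1)$ (see \cite{ZhaoJing}). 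Hence all the structural hypotheses of \cite[Theorem 3.2]{ZhaoJing} are in force.

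It remains to pin down the threshold. By Theorem \ref{stability} with $L=F-V$ we have $sign(s(F-V))=sign(s(\hat{F}-\hat{V}))$, and Theorem \ref{thm-1} gives $sign(\mathcal{R}_0-1)=\lambda^{*}$; thus $\lambda^{*}\le 0$ exactly when $\mathcal{R}_0\le 1$ and $\lambda^{*}>0$ exactly when $\mathcal{R}_0>1$, so the zero solution of \eqref{ApplE2} is stable in the former case and unstable in the latter. Solving the equilibrium equations of \eqref{ApplE1} by eliminating $N$, $A_q$, $A_f$ through the linear chain reduces the existence of a positive equilibrium to the scalar identity $L=(\mathcal{R}_0-1)h$, which has a unique positive root precisely when $\mathcal{R}_0>1$. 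Therefore \cite[Theorem 3.2]{ZhaoJing} yields: when $\mathcal{R}_0\le 1$, global asymptotic stability of $0$ in $C_+$, which is (i); when $\mathcal{R}_0>1$, a unique positive equilibrium $u^{*}$ (with $L$-component $(\mathcal{R}_0-1)h$) that is globally asymptotically stable in $C_+\setminus\{0\}$, which is (ii).

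I expect the main difficulty to be bookkeeping rather than conceptual: one must check that every hypothesis of \cite[Theorem 3.2]{ZhaoJing} — monotonicity, eventual strong monotonicity, strict subhomogeneity, point dissipativity, compactness, and the principal-eigenvalue description of the stability of $0$ — holds for this particular system, and in particular that the irreducibility forced by the tick life cycle is what licenses treating $\lambda^{*}$ as a genuine principal eigenvalue, so that the borderline case $\mathcal{R}_0=1$ (where $\lambda^{*}=0$) still falls on the side where $0$ is globally attracting.
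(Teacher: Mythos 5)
Your proposal is correct and follows essentially the same route as the paper, which proves Theorem \ref{Exampe} by combining Theorems \ref{stability} and \ref{thm-1} with \cite[Theorem 3.2]{ZhaoJing} applied to the monotone, subhomogeneous, point-dissipative semiflow of \eqref{ApplE1}; your elimination giving the positive equilibrium $L=(\mathcal{R}_0-1)h$ matches the paper's remark that a unique positive equilibrium exists exactly when $\mathcal{R}_0>1$. The additional details you supply (quasimonotonicity, irreducibility of the life-cycle loop, the dissipativity cascade) are exactly the verifications the paper leaves implicit.
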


\end{document}